\newtheorem{theorem}{Theorem}[section]
\newtheorem{lemma}[theorem]{Lemma}
\newtheorem{proposition}[theorem]{Proposition}
\newtheorem{remark}[theorem]{Remark}
\theoremstyle{definition}
\newcommand{\scr}[1]{\mathscr #1}
\definecolor{wco}{rgb}{0.5,0.2,0.3}
\numberwithin{equation}{section} \theoremstyle{remark}
\newcommand{\ua}{\uparrow}
\DeclareMathOperator{\Lip}{Lip}
\DeclareMathOperator{\supp}{supp}
\title{{\bf   Wasserstein asymptotics for empirical measures of diffusions on four dimensional closed manifolds  }}
\author{{\bf Dario Trevisan$^{a)}$ \, \  Feng-Yu Wang$^{b)}$ \, \ Jie-Xiang Zhu$^{c)}$    }\\
\footnotesize{$^{a)}$ Dipartimento di Matematica, Universit\`a di Pisa, Italy}\\
\footnotesize{$^{b)}$ Center for Applied Mathematics, Tianjin University, China}\\
\footnotesize{$^{c)}$ Department of Mathematics, Shanghai Normal University, China}\\
\footnotesize{ dario.trevisan@unipi.it, \, \    wangfy@tju.edu.cn, \, \ jiexiangzhu7@gmail.com}}
\begin{document}
\allowdisplaybreaks
\def\R{\mathbb R}  \def\ff{\frac} \def\ss{\sqrt} \def\B{\mathbf
B}
\def\N{\mathbb N} \def\kk{\kappa} \def\m{{\bf m}}
\def\ee{\varepsilon}\def\ddd{D^*}
\def\dd{\delta} \def\DD{\Delta} \def\vv{\varepsilon} \def\rr{\rho}
\def\<{\langle} \def\>{\rangle}
  \def\nn{\nabla} \def\pp{\partial} \def\E{\mathbb E}
\def\d{\text{\rm{d}}} \def\bb{\beta} \def\aa{\alpha} \def\D{\scr D}
  \def\si{\sigma} \def\ess{\text{\rm{ess}}}\def\s{{\bf s}}
\def\beg{\begin} \def\beq{\begin{equation}}  \def\F{\scr F}
\def\Ric{\mathcal Ric} \def\Hess{\text{\rm{Hess}}}
\def\e{\text{\rm{e}}} \def\ua{\underline a} \def\OO{\Omega}  \def\oo{\omega}
 \def\tt{\tilde}\def\[{\lfloor} \def\]{\rfloor}
\def\cut{\text{\rm{cut}}} \def\P{\mathbb P} \def\ifn{I_n(f^{\bigotimes n})}
\def\C{\scr C}      \def\aaa{\mathbf{r}}     \def\r{r}
\def\gap{\text{\rm{gap}}} \def\prr{\pi_{{\bf m},\nu}}  \def\r{\mathbf r}
\def\Z{\mathbb Z} \def\vrr{\nu} \def\ll{\lambda}
\def\L{\scr L}\def\Tt{\tt} \def\TT{\tt}\def\II{\mathbb I}
\def\i{{\rm in}}\def\Sect{{\rm Sect}}  \def\H{\mathbb H}
\def\M{\mathbb M}\def\Q{\mathbb Q} \def\texto{\text{o}} \def\LL{\Lambda}
\def\Rank{{\rm Rank}} \def\B{\scr B} \def\i{{\rm i}} \def\HR{\hat{\R}^d}
\def\to{\rightarrow} \def\gg{\gamma}
\def\EE{\scr E} \def\W{\mathbb W}
\def\A{\scr A} \def\Lip{{\rm Lip}}\def\S{\mathbb S}
\def\BB{\scr B}\def\Ent{{\rm Ent}} \def\i{{\rm i}}\def\itparallel{{\it\parallel}}
\def\g{{\mathbf g}}\def\Sect{{\mathcal Sec}}\def\T{\mathcal T}\def\BB{{\bf B}}
\def\f\ell \def\g{\mathbf g}\def\BL{{\bf L}}  \def\BG{{\mathbb G}}
\def\Bd{{D^E}} \def\BdP{D^E_\phi} \def\Bdd{{\bf \dd}} \def\Bs{{\bf s}} \def\GA{\scr A}
\def\Bg{{\bf g}}  \def\Bdd{\psi_B} \def\supp{{\rm supp}}\def\div{{\rm div}}
\def\ddiv{{\rm div}}\def\osc{{\bf osc}}\def\1{{\bf 1}}\def\BD{\mathbb D}
\def\H{{\bf H}}\def\gg{\gamma} \def\n{{\mathbf n}}\def\GG{\Gamma}\def\HAT{\hat}
\def\SU{{\bf SU}}\def\Var{{\rm Var}}\def\DD{\Delta}\def\T{\mathbb T}
\def\eps{\varepsilon}
\newcommand{\bra}[1]{\left( #1 \right)}
\newcommand{\dbra}[1]{\left \langle #1 \right |}
\newcommand{\dket}[1]{\left| #1 \right \rangle}
\newcommand{\sprod}[2]{\left< #1 | #2 \right>}
\newcommand{\sqa}[1]{\left[ #1 \right]}
\newcommand{\cur}[1]{\left\{ #1 \right\}}
\newcommand{\ang}[1]{\left< #1 \right>}
\newcommand{\abs}[1]{\left| #1 \right|}
\newcommand{\nor}[1]{\left\| #1 \right\|}
\newcommand{\dist}{\mathsf{d}_M}
\newcommand{\les}{\lesssim}
\newcommand{\vol}{\operatorname{vol}}
\newcommand{\cost}{\mathsf{c}}
\newcommand{\V}{\mathbf{V}}

\maketitle

\begin{abstract}     We identify the leading term in the asymptotics of the quadratic Wasserstein distance between the invariant measure and empirical measures for diffusion processes on closed weighted four-dimensional Riemannian manifolds. Unlike results in lower dimensions, our analysis shows that this term depends solely on the Riemannian volume of the manifold, remaining unaffected by the potential and vector field in the diffusion generator.

  \end{abstract} \noindent
 AMS subject Classification:\  49Q22, 60B1.   \\
\noindent
 Keywords:  Empirical measures; Diffusion processes; Optimal transport.

 \vskip 2cm
\section{Introduction and main result}

On a $d$-dimensional closed Riemannian manifold $M$, let $L:=\DD+\nn V \nabla +Z$,
$$L(f) = \DD f + \nn V \, \nn f + Z f, \quad \forall f \in C^2(M),$$
where $\DD$ is the Laplacian, $V\in C^2(M)$ such that for the Riemannian volume measure ${\vol}$
$$\mu(\d x):=\e^{V(x)}{\rm vol}(\d x)$$ is a probability measure,
and $Z$ is a $C^1$-vector field (i.e., a derivation) with ${\rm div}_\mu(Z)=0$:
$$\int_M Z f  \d \mu =0,\ \ \  \forall f\in C^1(M).$$
Let $X := (X_t)_{t\ge 0}$ be the diffusion process generated by $L$ and write $P_t$ for its transition semigroup. It is well-known that $X$ is exponentially ergodic with $\mu$ as its unique invariant probability measure. Consider  then the empirical measure
$$\mu_T:= \ff 1 T\int_0^T \dd_{X_t}\d t,\ \ T>0,$$ where for any $x\in M$, $\dd_x$ denotes the Dirac measure at point $x$. By ergodicity, a.s.\ the weak convergence of probabilities $\mu_T \to \mu$ holds as $T \to \infty$.  It is of interest to establish quantitative convergence results in terms of suitable metrics on the space of probabilities on $M$. A natural choice is provided here by the Wasserstein distance $\W_p$ (for any $p \ge 1$) induced by the Riemannian distance $\rr$ on $M$. The distance $\W_p$ is the $p$-th root of the optimal cost required to transport $\mu_T$ into $\mu$, where the displacement cost from $x$ to $y$ is given by the $p$-th power of the Riemannian distance $\rr(x,y)^p$. In this context, it has been deeply investigated in a series of recent works  \cite{wang2022wasserstein, wang2023convergence, wang2023limit} and the following asymptotic behavior holds \cite[theorem 1.1]{wang2024sharp}, given  $1 \le p\le \max\cur{\frac{2d}{(d-2)^+},  \frac{d(d-2)}2 }$, for every $x \in M$:
\begin{equation}\label{eq:rates}
\E^x\sqa{ \W_p^p(\mu_T, \mu)} \sim\footnote{ The notation $f(T)\sim g(T)$ means that it holds $c^{-1} \le f(T)/g(T) \le c$  for $T$ sufficiently large, for some constant $c \in (0, \infty)$ possibly depending on $M$, $L$ and other parameters but not $T$. We also collect for later use the notation $f(T) \les g(T)$, when it holds $f(T) \le c g(T)$ for $T$ sufficiently large, for some constant $c < \infty$.} \begin{cases} T^{-p/2}& \text{if $d  \le 3$,}\\
 \bra{(\log T)/T }^{p/2} & \text{if  $d=4$,}\\
T^{-p/(d-2)} & \text{if $d \ge 5$,}
\end{cases}
\end{equation}
where $\E^x$ denotes the expectation with respect to the probability $\P^x$ under which the diffusion process has initial condition $X_0 = x$. It is conjectured that \eqref{eq:rates} can be extended to all $1 \leq p < \infty$.

If one denotes with $R_{p,d}(T)$ the right hand side in \eqref{eq:rates}, the existence of the limit
\begin{equation}\label{eq:limit-general}
 \lim_{T \to \infty}  \frac{\E^x\sqa{ \W_p^p(\mu_T, \mu)}}{R_{p,d}(T)} =: \cost(L,p)
\end{equation}
is also naturally conjectured, although it is only proved so far in the case $d \le 3$ in \cite{wang2023convergence, wang2024sharp}. For $p=2$, it reads
\begin{equation}\label{eq:limit-d-le-3}
  \lim_{T \to \infty} T \E^x\sqa{ \W_2^2(\mu_T, \mu)}  = \sum_{i=1}^\infty   \ff 2 {\ll_i^2} \bra{1-\ff 1{\ll_i} {\V}(Z \phi_i)},
\end{equation}
where  $\{\ll_i\}_{i\ge 1}$ are the strictly positive eigenvalues of $-(\DD+\nn V)$ in $L^2(\mu)$, $\{\phi_i\}_{i\ge 1}$ are associated  unit $L^2(\mu)$-norm eigenfunctions and $\V$  denotes the quadratic form
\begin{equation}\label{eq:v-def}\V(\phi):= \int_0^\infty \int_{M} \phi P_t \phi \, \d \mu \, \d t.\end{equation}
 The appearance of $\V$ is ultimately due to the central limit theorem  in this setting \cite{liming1995moderate}: for every $\phi \in L^2(\mu)$ with $\int_M \phi \, \d \mu = 0$, one has convergence in law, as $T \to \infty$,
% for any $\phi\in L^2(\mu)$ with $\mu(\phi)=0,$
% $${\bf V}(\phi) := \int_0^\infty \mu(\phi P_t\phi)\d t\in (0,\infty). $$
% , we investigate the asymptotic bef
% For any probability measure $\nu$ on $M$, let $\E^\nu$ be the expectation for the diffusion process with initial distribution $\nu$. Simply denote $\E^\nu = \E^x$ when $\nu=\dd_x.$
% According to the recent work \cite{wang2023convergence}, see also \cite{wang2023limit} for $Z=0$, when $d\le 3$ we have
% $$ \lim_{T\to\infty}  \E^x [\W_2^2(\mu_T,\mu) ]
% =\sum_{i=1}^\infty   \ff 2 {\ll_i^2}\Big(1-\ff 1{\ll_i} {\bf V}(Z\phi_i)\Big)\in (0,\infty)$$
% uniformly in $x\in M$, where
% Note that by the central limit theorem, see \cite{liming1995moderate}, when $T\to\infty$
\begin{equation}\label{eq:CLT}\frac{1}{\sqrt{T}} \int \phi \d \mu_T = \ff 1 {\ss T}\int_0^T \phi(X_t)  \d t\to \mathcal N (0, 2{\V}(\phi)).\end{equation}
% , where $N(0, {\bf V}(\phi))$ is the centered normal distribution with variance ${\bf V}(\phi)$.
Let us notice that, since $\V(\phi) \ge 0$ for every $\phi$, the limit \eqref{eq:limit-d-le-3} yields that $\cost(L,2) \le \cost(\Delta +\nabla V \nabla , 2)$, i.e., convergence is faster (although with the same asymptotic rate) in the non-symmetric case, i.e., if $Z \neq 0$.

If the dimension of $M$ is larger than $3$, existence of the limit \eqref{eq:limit-general} is currently an entirely open problem. In \cite[section 1.3]{mariani2023wasserstein}, it is conjectured that for $d \ge 5$, the constant $\cost(\Delta + \nabla V \nabla , p)$ could be given by an expression in terms of the corresponding limiting constant for the Brownian interlacement occupation measure, although only an upper bound for $M = \T^d$ the flat torus and $V = 0$ is established  \cite[theorem 1.2]{mariani2023wasserstein}.
%
% Since $\sum_{i=1}^\infty \ff 1 {\ll_i^2} = \infty$ for $d\ge 4$, $\E^x [\W_2^2(\mu_T,\mu) ] $ decays slower than
% $T^{-1}$. Indeed,  for $d=4$,   we have (see \cite{wang2023convergence}, and  also \cite{wang2023limit} for $M$ being the $4D$ torus)
% $$\ff {\log T}T \les \E^x[\W_2^2(\mu_T,\mu)] \les \ff {\log T}T,\ \ T\ge 2$$
%   uniformly in $x\in M,$  where and in the sequel, for two nonnegative functions $f$ and $g$,
% $f\les g$ means that $f\le cg$ for some constant $c>0.$

In this communication we show the validity of \eqref{eq:limit-general} for $d=4$ and $p=2$.

\begin{theorem}\label{T1}
With the notation introduced above, for a closed Riemannian manifold $M$ with dimension $d=4$, weighted volume measure $\mu$, and the occupation measure $\mu_T$ of the diffusion process with generator $L = \Delta + \nabla V \nabla + Z$, it holds %$M$ be a $4$-dimensional closed Riemmanian manifold, $V \in C^2(M)$ such that $\mu  and $Z$ a $C^1$ vector field with $\div Write $\vol$ for its volume measure and $(B_t)_{t \ge 0}$ for a stationary Brownian motion on $M$. Then  for large $T>0,$
\begin{equation}\label{eq:lim-exp} \sup_{x\in M}\bigg| \ff T{\log T}\E^x\sqa{ \W_2^2 \bra{ \mu_T, \mu } }- \frac{\vol(M)}{8 \pi^2 } \bigg|
\les  \sqrt{ \frac{\log\log T}{\log T}}. \end{equation}
%Moreover, as $T\to\infty$ the following convergence in probability holds:
%\begin{equation} \label{eq:convergence-in-probability} \ff{T}{\log T}\W_2^2 \bra{\mu_T, \mu } \to \frac{\vol(M)}{4 \pi^2 }.\end{equation}
\end{theorem}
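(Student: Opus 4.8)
The plan is to reduce the Wasserstein estimate to a spectral/heat-kernel computation via the now-standard linearization of $\W_2$ in this stationary setting. First, recall (from the earlier works of Wang and collaborators that underpin the $d\le 3$ result) the upper bound $\W_2^2(\mu_T,\mu)\le \|\mu_T-\mu\|_{\dot H^{-1}(\mu)}^2$, together with a matching lower bound up to lower-order corrections; concretely, writing $u_T$ for the solution of the elliptic problem $(\Delta+\nn V\nn)u_T=\mu_T-\mu$, one has $\W_2^2(\mu_T,\mu)=\int_M|\nn u_T|^2\,\d\mu + (\text{error})$, where the error is controlled by higher-integrability norms of $\mu_T-\mu$ smoothed at a suitable scale. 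In spectral terms, $\int_M|\nn u_T|^2\,\d\mu=\sum_{i\ge 1}\ll_i^{-1}\big(\int\phi_i\,\d\mu_T\big)^2$. Taking $\E^x$ and using the variance formula behind \eqref{eq:CLT} gives, to leading order,
\begin{equation*}
\E^x\big[\W_2^2(\mu_T,\mu)\big]\approx \frac{1}{T}\sum_{i\ge 1}\frac{2}{\ll_i^2}\Big(1-\frac{1}{\ll_i}\V(Z\phi_i)\Big) + \text{corrections},
\end{equation*}
but in dimension $4$ the series $\sum_i \ll_i^{-2}$ diverges (by Weyl's law $\ll_i\sim c\, i^{2/d}=c\,i^{1/2}$, so $\ll_i^{-2}\sim i^{-1}$), so this naive sum must be truncated and the divergence is exactly what produces the $\log T$.

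The key step is therefore a careful \emph{regularization at scale $t_T$}: replace $\mu_T$ by $P_{t_T}^*\mu_T$ (heat-semigroup smoothing) for a well-chosen $t_T\to 0$, say $t_T\asymp (\log T)^{-1}$ up to $\log\log$ factors, and split
\begin{equation*}
\W_2^2(\mu_T,\mu)\le 2\,\W_2^2(\mu_T,P_{t_T}^*\mu_T)+2\,\W_2^2(P_{t_T}^*\mu_T,\mu),
\end{equation*}
with an analogous lower bound. The first term is handled by the contraction/regularization estimates for $\W_2$ along the heat flow on a manifold with bounded geometry: $\W_2^2(\mu_T,P_{t_T}^*\mu_T)\les t_T\cdot(\text{local mass fluctuations})$, which after taking expectations is $o((\log T)/T)$. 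For the second (smoothed) term, one writes $\E^x[\W_2^2(P_{t_T}^*\mu_T,\mu)]\approx \frac1T\sum_i e^{-2\ll_i t_T}\ll_i^{-2}\cdot 2(1-\ll_i^{-1}\V(Z\phi_i))$ and evaluates the sum: the cutoff $e^{-2\ll_i t_T}$ kills the tail at $i\gtrsim t_T^{-2}$, so $\sum_i e^{-2\ll_i t_T}\ll_i^{-2}\sim \tfrac12\log(1/t_T)\cdot\text{const}$, where the constant is read off from the heat-trace / Weyl asymptotics on a $4$-manifold. Tracking the precise constant: $\sum_i e^{-2\ll_i t}\ll_i^{-2}=\int_0^\infty s\,\mathrm{tr}(e^{-(s+2t)(-\Delta-\nn V)})\,\d s$ (dropping the zero eigenvalue), and the small-time heat-kernel expansion $\mathrm{tr}(e^{-\tau(-\Delta-\nn V)})\sim \vol(M)/(4\pi\tau)^{2}$ in dimension $4$ gives $\int s\cdot \vol(M)/(4\pi(s+2t))^2\,\d s\sim \frac{\vol(M)}{16\pi^2}\log(1/t)$; combined with the factor $2/T$ and $\log(1/t_T)\sim \log\log T\cdot(\dots)$... — here one must instead take $t_T$ tending to $0$ like a \emph{power} of $T$, or rather choose $t_T$ so that $\log(1/t_T)\sim \log T$, i.e. $t_T\asymp T^{-c}$; then $\frac{2}{T}\cdot\frac{\vol(M)}{16\pi^2}\log T=\frac{\vol(M)}{8\pi^2}\cdot\frac{\log T}{T}$, which is the claimed constant. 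The crucial point that makes the answer independent of $V$ and $Z$: the subleading heat-trace coefficients and the $\V(Z\phi_i)$ corrections contribute only $O(1)$ (not $O(\log T)$) to the sum, since $\sum_i \ll_i^{-3}$ and $\sum_i\ll_i^{-2}|\V(Z\phi_i)|$ converge in $d=4$ (the latter using $|\V(Z\phi_i)|\les \ll_i^{-1}\|Z\phi_i\|_{L^2(\mu)}^2\les \ll_i$, hence the $i$-th term is $\les\ll_i^{-2}$, which is only barely non-summable, so one needs a slightly finer bound such as $\|Z\phi_i\|_{L^2(\mu)}\les \ll_i^{1/2}$ giving an $O(\log)$ contribution that must be shown to cancel or be absorbed — this is a delicate point).

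The main obstacle I anticipate is making the linearization error genuinely lower-order \emph{uniformly in $x$} and at the borderline dimension $d=4$: the error terms in $\W_2^2\approx \|\cdot\|_{\dot H^{-1}}^2$ involve $L^q$-norms of the smoothed density $P_{t_T}^*\mu_T - 1$ for some $q>2$ (or a Besov/negative-Sobolev refinement), and controlling their expectations requires moment bounds for $\int\psi\,\d\mu_T$ beyond the second moment, uniformly over a large family of test functions $\psi$ concentrated at scale $t_T$ — essentially a quantitative CLT with good control of the constants as the scale shrinks. In dimension $4$ this is exactly critical, so one expects to lose precisely the $\sqrt{\log\log T/\log T}$ factor appearing on the right-hand side of \eqref{eq:lim-exp}; optimizing the choice of $t_T$ against these two competing errors (smoothing error $\les t_T$ large mass fluctuation vs. truncation/moment error) is what pins down that rate. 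A secondary technical point is the transfer from the flat/Euclidean heat-kernel constant $(4\pi\tau)^{-d/2}$ to the weighted manifold: one must verify that only the top Weyl term survives at the $\log T$ order and that the weight $\e^V$ and curvature enter only in $O(1)$ corrections, which follows from the Minakshisundaram–Pleijel expansion for the operator $-(\Delta+\nn V)$ on $L^2(\mu)$ and the convergence of $\sum_i\ll_i^{-3}$.
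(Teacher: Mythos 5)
Your overall strategy (heat-semigroup smoothing, linearizing $\W_2^2$ as the $\dot H^{-1}$ norm of the smoothed density, Weyl/heat-trace asymptotics for the constant, and a Duhamel expansion to show the $Z$-correction is lower order) is indeed the skeleton of the paper's argument. But there is a genuine gap at the point you gloss over, and it is exactly where the paper's new work lies: the choice of smoothing scale. To get the constant $\vol(M)/(8\pi^2)$ you correctly conclude you need $\log(1/t_T)\sim\log T$, i.e.\ $t_T\approx 1/T$ up to logarithmic factors. But the linearization step (in the paper: the event that $\|\nabla^2 f_{T,\eps}\|_\infty\le 1/\log T$, needed to invoke the Glaudo optimal-map theorem and the Dacorogna--Moser interpolation) is only provable via the Bernstein-type concentration inequality when the exponent $T\xi^2\eps$ is large, which forces $\eps=(\log T)^\gamma/T$ with $\gamma>3$ — a pure power $T^{-c}$ or $1/T$ does not work. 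At that workable scale, however, the naive contraction $\W_2^2(\mu_T,\mu_{T,\eps})\les\eps$ is off by a factor $(\log T)^{\gamma-1}$ relative to the target $\log T/T$, so your claim that the smoothing error "after taking expectations is $o((\log T)/T)$" is unsupported. The paper resolves this tension with an \emph{improved contractivity estimate} (its Lemma 3.1): on the Hessian-flatness event one interpolates over scales, $\W_2^2(\mu_{T,\eps'},\mu_{T,\eps})\les\int_{\eps'}^{\eps}\|u_{T,s}-1\|_{L^2(\mu)}^2\,\d s$ with $\E\|u_{T,s}-1\|_{L^2}^2\les (Ts)^{-1}$, and chooses $\eps'=\log\log T/T$, yielding $\E\W_2^2(\mu_T,\mu_{T,\eps})\les\log\log T/T$. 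This lemma, absent from your proposal, is also the source of the stated rate: the cross term $\E[\W_2(\mu_T,\mu_{T,\eps})\W_2(\mu_{T,\eps},\mu)]$ (note you must use the triangle inequality at the level of $\W_2$, not the lossy $\W_2^2\le 2+2$ splitting, or the constant is destroyed) produces $\sqrt{\log\log T/\log T}$ — not the critical-dimension moment losses you speculate about.

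Two further points. First, your handling of the $Z$-correction is imprecise: the relevant series is $\sum_i\ll_i^{-3}\V(Z\phi_i)$, and with the bound $\V(Z\phi_i)\les\sqrt{\ll_i}$ (proved in the paper from $\div_\mu Z=0$, integration by parts and the gradient estimate $\|\nabla P_t\|_{L^2\to L^2}\les(1\wedge t)^{-1/2}e^{-\ll t}$) it converges outright — no cancellation or absorption of an "$O(\log)$ contribution" is needed, contrary to what you suggest; moreover one needs a quantitative version of the variance identity, uniform in $i$ with error $O(1/(\ll_i T))$ (the paper's Proposition 2.2), not just the asymptotic CLT variance. Second, you leave the supremum over the starting point $x$ as an obstacle inside the linearization; the paper sidesteps uniform-in-$x$ analysis entirely by proving the stationary case first and then transferring to arbitrary $x$ via the coupling by reflection, which gives $\E[\W_2^2(\mu_T^x,\mu_T^\mu)]\les 1/T$ uniformly in $x$. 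Without these ingredients the proposal does not close.
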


Thus, we explicitly compute that $\cost(L, 2) = \vol(M)/(8 \pi^2)$ for any $4$-dimensional closed Riemannian manifold. In particular, we show that the leading term in the Wasserstein asymptotics does not depend on $V$, nor $Z$. In particular, the independence from the field $Z$ can be explained as follows: in \eqref{eq:limit-d-le-3}, the series of the terms $1/\lambda_i^2$  diverges logaritmically when $d=4$, but the series of terms $\V(Z\phi_i)/\lambda_i^3$ is still convergent (see \eqref{eq:v-z-phi-conv} below). This phenomenon, although novel in this setting, is not completely unexpected, for in the transport of i.i.d.\ samples on weighted two-dimensional manifolds  \cite{AST, AmGlau, benedetto2021random, ambrosio2022quadratic} the leading term in the asymptotics also depends on the volume only.

The overall structure of the proof borrows from the literature of transport of i.i.d.\ samples, and in particular \cite{AmGlau}: in Section \ref{sec:mod}, we prove Theorem \ref{T2} concerning the asymptotics for the transport cost  between a smoothed empirical measure $\mu_{T,\vv}:= \hat P_\vv^*\mu_T  $ for $\vv>0$, where $\hat P_\vv $ is the symmetric diffusion semigroup generated by $\hat L:=\DD+\nn V \nn$, and $\hat{P}_\vv^*$ denotes its dual action on measures.  % The remainder of the paper is organized as follows. In Section \ref{sec:mod}, we study the asymptotic for the modified empirical measure $\mu_{T,\vv}:= \hat P_\vv^*\mu_T  $ for small $\vv>0$, where $\hat P_\vv $ is the symmetric diffusion semigroup generated by $\hat L:=\DD+\nn V$, and $\hat P_\vv^*$ is the adjoint operator; i,e, for for any probability measure $\nu$, $\hat P_\vv^*\nu$ is the probability measure determined by
% $$\big( \hat P_\vv^*\nu \big) (f) = \nu (\hat P_\vv f),\ \ f\in \mathscr{B}_b(M).$$
%
Next, in Section \ref{sec:refinedcontractivity}, we provide an estimate on the expectation of  ${ \W_2^2(\mu_T, \mu_{T,\vv}) }$, which refines the simpler contraction estimate
\beq\label{TR} \W_2^2(\mu_T,\mu_{T,\vv}) \le c \vv,\ \ T>0, \, \vv\in (0,1)\end{equation}
valid for some constant $c = c(M)>0$. We acknowledge that both these facts parallel the argument from \cite{AmGlau}, but they require new considerations and the introduction of different tools than the case of transport of i.i.d.\ points.  Finally, in Section \ref{sec:proof-main}, we combine them to establish our main result.

Unless otherwise stated, we always take in what follows $d=4$ and set $\vv:= (\log T)^\gamma / T$ for some fixed constant $\gg>3$ and consider the case of $X$ being stationary, i.e., any marginal law of $X$ equals $\mu$. To keep notation simple we write $\mathbb{P}$ and $\E$ for expectation with respect to this law.

\section{Asymptotics for the smoothed empirical measures}\label{sec:mod}

% and consider the long time behavior of $\W_2^2(\mu_{T,\vv},\mu)$. The main result in the part is the following.

\beg{theorem}\label{T2}
% Let $d=4$ and $ \vv= \frac{(\log T)^\gamma}{T}$ for some constant $\gg>3$. Then for large  $T>0$,
With the notation introduced above, on a four dimensional closed Riemannian manifold $M$,  it holds
$$\bigg|\ff{T}{\log T}\E[\W_2^2(\mu_{T,\vv},\mu)]- \ff{{\rm vol}(M)}{8\pi^2}\bigg|\les  \frac{\log \log T}{\log T}.$$
\end{theorem}

To prove this result, we  rely on a well-known bound for $\W_2$ in terms of the $H^{-1}$ norm of the density $u_{T,\vv}$ of the smoothed empirical measure $\mu_{T,\vv}$ with respect to $\mu$.

We introduce first some notation.  Let $\hat p_t$ be the (symmetric) heat kernel of $\hat P_t$ with respect to $\mu$. Then, we have
\beq\label{MT1}\mu_{T,\vv}= \hat P_\vv^*\mu_T=: u_{T,\vv}\mu, \quad  { u_{T,\vv}}:= \ff 1 T\int_0^T \hat p_\vv(X_t,\cdot)\d t.\end{equation}
Next, we consider the Poisson kernel
$$q_\vv(x,y):= \int_0^\infty [\hat p_{t+\vv}(x,y)-1]\d t,\ \ x,y\in M,$$
so that
\beq\label{MT2} f_{T,\vv}:= (-\hat L)^{-1}(u_{T,\vv}-1)= \ff 1 T\int_0^T q_\vv(X_t,\cdot)\d t.\end{equation}
In the following, we first estimate the expectation of $\mu(|\nn f_{T,\vv}|^2) := \int_M |\nn f_{T,\vv}|^2 \d \mu$, then prove the above theorem by comparing
it with $\E[\W_2^2(\mu_{T,\vv},\mu)]$.

\subsection{Estimate of $\E \sqa{ \mu( |\nn f_{T,\vv}|^2)}$} Recall that, for $i\ge1$, $\phi_i$  denotes a (zero mean) unit norm eigenfunction (for $\hat{L}$) with eigenvalue $-\ll_i<0$, i.e. $\|\phi_i\|_{L^2(\mu)}=1$ and $\hat L\phi_i=-\ll_i\phi_i.$ We define
 $$\psi_i(T):= \ff 1 {\ss T} \int_M \phi_i \d \mu_T = \ff 1 {\ss T} \int_0^T \phi_i(X_t) \d t,\ \ T>0,$$
where the factor $1/ \ss T$ arises from the central limit theorem \eqref{eq:CLT}. %Recall that for every non-zero function $\phi \in L^2(\mu)$ with $\mu(\phi)=0$, we have
% $${\bf V}(\phi):= \int_0^\infty \mu(\phi P_t \phi) \, \d t\in (0,\infty).$$
By \eqref{MT1} and the spectral representation $\hat p_\vv(x,y) -1 = \sum_{i=1}^\infty \e^{-\lambda_i \eps} \phi_i(x) \phi_i(y)$, we have
\begin{equation} \label{spectral}
u_{T, \eps} - 1 = \ff{1}{\ss T} \sum_{i = 1}^{\infty} \e^{-\ll_i \vv} \psi_i(T) \phi_i.
\end{equation}
Therefore,
\begin{equation*}
f_{T, \eps} = (-\hat L)^{-1}(u_{T, \eps} - 1) = \ff{1}{\ss T} \sum_{i = 1}^{\infty} \ff{\e^{-\ll_i \vv}}{\ll_i} \psi_i(T) \phi_i.
\end{equation*}
Since $(\phi_i)_{i \geq 1}$ is an orthonormal sequence in $L^2(\mu)$, we find after an integration by parts
\beq\label{MT3}
\E \sqa{ \mu \bra{ |\nn f_{T,\vv}|^2 } } =  \E\sqa{ \int_M  f_{T, \eps} (-\hat{L}) f_{T,\vv} \d \mu }  = \ff{1}{T} \sum_{i = 1}^{\infty} \ff{\e^{- 2\ll_i \vv}}{\ll_i} \E[|\psi_i(T)|^2].
\end{equation}

We claim that the following expansion for $\E \big[ |\psi_i(T)|^2 \big]$ holds (we keep the dimension $d$ general, for possible future reference).

\begin{proposition}\label{lemma:ito-tanaka-eigen} For any closed Riemannian manifold $M$ with dimension $d\ge 1$,
there exists a constant $c=c(M, L)>0$ such that
\begin{equation} \label{eq:estimate-e-lambda}
\Big|\E \big[ |\psi_i(T)|^2 \big]-\ff 2 {\ll_i}+ \ff 2 {\ll_i^2} {\bf V}(Z\phi_i)\Big| \le \ff c {\ll_i (1+T)},\ \ i\ge 1, \, T>0.
\end{equation}
\end{proposition}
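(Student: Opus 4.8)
The plan is to reduce the computation of $\E[|\psi_i(T)|^2]$ to (i) an exact algebraic identity for the limiting quantity $\V(\phi_i)$, and (ii) a quantitative decay estimate for $(-L)^{-1}\phi_i$ seen through the semigroup. By stationarity and the Markov property, with $\langle\cdot,\cdot\rangle$ the $L^2(\mu)$ pairing and $g_i(r):=\langle\phi_i,P_r\phi_i\rangle=\int_M\phi_i\,P_r\phi_i\,\d\mu$, I would first write
$$\E[|\psi_i(T)|^2]=\ff1T\,\E\Big[\Big(\int_0^T\phi_i(X_t)\,\d t\Big)^{\!2}\Big]=2\int_0^T\Big(1-\ff rT\Big)g_i(r)\,\d r.$$
Here I use $\E[\phi_i(X_s)\phi_i(X_t)]=g_i(|t-s|)$. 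Since $P_r$ is a contraction on the zero-mean subspace with $\|P_r\phi\|_{L^2(\mu)}\le\e^{-\ll_1 r}\|\phi\|_{L^2(\mu)}$ (because $\langle v,Lv\rangle=\langle v,\hat Lv\rangle\le-\ll_1\|v\|_{L^2(\mu)}^2$ for zero-mean $v$, using $\langle v,Zv\rangle=0$), $g_i$ decays exponentially, so $\V(\phi_i)=\int_0^\infty g_i(r)\,\d r=\langle\phi_i,(-\hat L - Z)^{-1}\phi_i\rangle=\langle\phi_i,(-L)^{-1}\phi_i\rangle$, and a Fubini computation turns the display into the exact decomposition
$$\E[|\psi_i(T)|^2]=2\V(\phi_i)-\ff2T\int_0^T\Big(\int_s^\infty g_i(r)\,\d r\Big)\d s.$$

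Second, I would establish the identity $\V(\phi_i)=\ff1{\ll_i}-\ff1{\ll_i^2}\V(Z\phi_i)$ by resolvent calculus, which (given $2\V(\phi_i)$ is the stated limit) reduces the Proposition to bounding the error integral above by $c/(\ll_i(1+T))$. Writing $L=\hat L+Z$ and $L^*=\hat L-Z$ (valid since $\ddiv_\mu Z=0$ gives $\langle Zf,g\rangle=-\langle f,Zg\rangle$, whence $\langle f,Zf\rangle=0$), the resolvent identity $(-L)^{-1}=(-\hat L)^{-1}+(-L)^{-1}Z(-\hat L)^{-1}$ applied to $\phi_i$ yields $(-L)^{-1}\phi_i=\ff1{\ll_i}\phi_i+\ff1{\ll_i}(-L)^{-1}Z\phi_i$. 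Pairing with $\phi_i$, moving $(-L)^{-1}$ to its adjoint $(-L^*)^{-1}$, applying the analogous expansion $(-L^*)^{-1}\phi_i=\ff1{\ll_i}\phi_i-\ff1{\ll_i}(-L^*)^{-1}Z\phi_i$, and using $\langle\phi_i,Z\phi_i\rangle=0$ together with the fact that the quadratic form $\psi\mapsto\langle\psi,(-L)^{-1}\psi\rangle=\V(\psi)$ is unchanged under $L\leftrightarrow L^*$, one gets $\langle\phi_i,(-L)^{-1}Z\phi_i\rangle=-\ff1{\ll_i}\V(Z\phi_i)$, hence the identity. (As a by‑product this also re‑proves $\V(\phi_i)\le 1/\ll_i$ and the sign remark after \eqref{eq:limit-d-le-3}.)

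Third — and this is the crucial step — I would prove the refined decay estimate $\big|\int_s^\infty g_i(r)\,\d r\big|=\big|\langle\phi_i,P_s(-L)^{-1}\phi_i\rangle\big|\le C\,\e^{-\ll_1 s}/\ll_i$. The naive route, Cauchy--Schwarz on $\|P_s^*\phi_i\|$ and $\|(-L)^{-1}\phi_i\|$, is a half power too weak, because the vector field differentiates $\phi_i$ and $\|Z\phi_i\|_{L^2(\mu)}\le\|Z\|_\infty\sqrt{\int_M|\nn\phi_i|^2\d\mu}=\|Z\|_\infty\sqrt{\ll_i}$ grows, so one only gets $1/\sqrt{\ll_i}$. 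The remedy is to peel off the $1/\ll_i$ first: from $(-L)^{-1}\phi_i=\ff1{\ll_i}\phi_i+\ff1{\ll_i}(-L)^{-1}Z\phi_i$ we have $\langle\phi_i,P_s(-L)^{-1}\phi_i\rangle=\ff1{\ll_i}g_i(s)+\ff1{\ll_i}\langle P_s^*\phi_i,(-L)^{-1}Z\phi_i\rangle$; the first term is $\le\e^{-\ll_1 s}/\ll_i$, and for the second I move $(-L)^{-1}$ and $Z$ onto their adjoints, $\langle P_s^*\phi_i,(-L)^{-1}Z\phi_i\rangle=-\langle Z(-L^*)^{-1}P_s^*\phi_i,\phi_i\rangle$, and then turn $\|Z(\cdot)\|$ into an energy norm that the resolvent tames: $\|Z(-L^*)^{-1}P_s^*\phi_i\|_{L^2(\mu)}\le\|Z\|_\infty\big(\int_M|\nn(-L^*)^{-1}P_s^*\phi_i|^2\d\mu\big)^{1/2}$, and for zero-mean $w$ one has $\int_M|\nn(-L^*)^{-1}w|^2\d\mu=\langle(-L^*)^{-1}w,w\rangle\le\|w\|_{L^2(\mu)}^2/\ll_1$; taking $w=P_s^*\phi_i$ with $\|P_s^*\phi_i\|\le\e^{-\ll_1 s}$ closes the estimate. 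Integrating in $s$,
$$\Big|\ff2T\int_0^T\Big(\int_s^\infty g_i(r)\,\d r\Big)\d s\Big|\le\ff{2C}{\ll_i}\cdot\ff1T\int_0^T\e^{-\ll_1 s}\,\d s\le\ff{c}{\ll_i(1+T)},$$
using $\sup_{T>0}(1+T)\min\{1,(\ll_1 T)^{-1}\}<\infty$, and combining with the decomposition and the spectral identity proves the claim.

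The main obstacle is precisely this third step: one must track the exact power of $\ll_i$, and every standard inequality involving $Z$ loses a $\sqrt{\ll_i}$ because $\|\nn\phi_i\|_{L^2(\mu)}=\sqrt{\ll_i}$; the fix is to extract $1/\ll_i$ from the resolvent identity before invoking Cauchy--Schwarz, and to route each surviving $\|Z(\cdot)\|$ through the Dirichlet-form identity $\int_M|\nn(-L^*)^{-1}w|^2\d\mu=\langle(-L^*)^{-1}w,w\rangle$. The first two steps are routine once the semigroup and spectral formalism is in place; note also that the argument is dimension-independent, consistent with the Proposition being stated for general $d\ge1$.
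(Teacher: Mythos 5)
Your proposal is correct, and its skeleton is the same as the paper's: by stationarity and the Markov property you write $\E[|\psi_i(T)|^2]$ as $2\V(\phi_i)$ minus a tail term (your Fubini decomposition is exactly the paper's identity \eqref{*1} after a change of variables), invoke the identity $\V(\phi_i)=\lambda_i^{-1}-\lambda_i^{-2}\V(Z\phi_i)$, and then show the tail is $O\bigl(e^{-cs}/\lambda_i\bigr)$ uniformly in $i$, which integrates to the stated $c/(\lambda_i(1+T))$. The differences are in the two technical ingredients. First, you derive the identity \eqref{*0} by the resolvent identity $(-L)^{-1}=(-\hat L)^{-1}+(-L)^{-1}Z(-\hat L)^{-1}$ and the antisymmetry of $Z$, whereas the paper simply cites it from an earlier work; your derivation is correct and makes the argument self-contained. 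Second, for the tail the paper stays at the semigroup level: it applies Duhamel's formula \eqref{eq:Duhamel} twice to get $\bigl|\mu(\phi_iP_t\phi_i)-e^{-\lambda_i t}\bigr|\le C\lambda_i^{-1}e^{-\lambda_1 t/2}$, with the double convolution of $e^{-\lambda_i(\cdot)}$ supplying the factor $\lambda_i^{-2}$ that beats the loss $\|Z\phi_i\|_{L^2(\mu)}^2\le\|Z\|_\infty^2\lambda_i$; you instead work with the time-integrated tail $\int_s^\infty\mu(\phi_iP_r\phi_i)\,\d r=\mu\bigl(\phi_iP_s(-L)^{-1}\phi_i\bigr)$, peel off $1/\lambda_i$ with a single application of the resolvent identity, and close via the energy identity $\mu\bigl(|\nabla(-L^*)^{-1}w|^2\bigr)=\mu\bigl(w(-L^*)^{-1}w\bigr)\le\|w\|_{L^2(\mu)}^2/\lambda_1$ applied to $w=P_s^*\phi_i$. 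This is the same mechanism (a perturbative expansion in $Z$ trading a power of $\lambda_i$ against the derivative hidden in $Z\phi_i$), but packaged in integrated form; it is arguably slightly cleaner, needing only one expansion and no convolution estimate, while the paper's pointwise-in-$t$ bound on $\mu(\phi_iP_t\phi_i)$ is a bit more information than strictly needed. All the auxiliary facts you use (exponential $L^2$-decay of $P_t$ and $P_t^*$ on mean-zero functions from $\mu(vZv)=0$, $Z^*=-Z$, zero mean of $Z\phi_i$ and $P_s^*\phi_i$) are justified, so the estimate holds uniformly for $T>0$ as required.
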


Using this in our four-dimensional setting we deduce the main bound for this part:
%and $\vv=\ff{(\log T)^\gamma}T$ for some constant $\gamma>0$, for large $T>2$,
\beq\label{FN0} \bigg|\ff{T}{\log T}\E \sqa{ \mu\bra{  |\nn f_{T,\vv}|^2 }}-
\ff{{\rm vol}(M)}{8\pi^2}\bigg|\les \ff {\log \log T}{\log T}.\end{equation}
Indeed,  combining $\|\nn \phi_i\|_{L^2(\mu)}=\ss{\ll_i}$  with the gradient estimate from \cite[Lemma 3.1]{wang2023convergence}, we find that for some constants $c=c(M, L)<\infty$ and $ \ll =\ll(M, L)> 0$, it holds
$$ \| \nn P_t  f \|_{L^2(\mu)} \le   c (1 \wedge t)^{-\frac12} \e^{-\ll t} \| f \|_{L^2(\mu)}, \ \ t > 0, f\in L^2(\mu),$$ and we have, integrating by parts and using that $\div_\mu Z = 0$,
$$ \big| \mu((Z\phi_i)P_t (Z\phi_i)) \big| = \big| \mu(\phi_i ZP_t (Z\phi_i)) \big| \le %\|Z\|_\infty \| \nn P_t (Z\phi_i) \|_{L^2(\mu)} \le
c \|Z\|_\infty^2 (1 \wedge t)^{-\frac12} \e^{-\ll t} \ss{\ll_i}.$$
So, ${\bf V}(Z\phi_i)\le c_1 \ss{\ll_i}$ for some constant $c_1 = c_1(M,L)< \infty$. Next, we recall the small time asymptotics for the heat trace \cite[Corollary 3.2.]{charalambous2019heat}:
\begin{align} \label{small time trace}
\sum_{i = 1}^{\infty} \e^{-t \ll_i} = {\rm tr} \, \e^{t \hat L} - 1 = \frac{\vol(M)}{16 \pi^2 t^2} + O( t^{-1} ), \quad \text{ as   $t \to 0$.}
\end{align}
By standard Tauberian arguments we find for the eigenvalue counting function $N(\lambda) := \sum_{i=1}^\infty 1_{ \cur{ \lambda_i \le \lambda}}$ the asymptotics $\lambda^{-2} N(\lambda)  = \vol(M)/ (32 \pi^2) + o(\lambda^{-2})$, which yields convergence of the series
% This,  together with $\ll_i\ge c_2 i^{\ff 1 2}$ for $d=4$ and some constant $c_2>0$ implies
\begin{equation}\label{eq:v-z-phi-conv} \sum_{i=1}^\infty \ff{1}{\ll_i^3} {\bf V}(Z\phi_i) \le c_1 \sum_{i=1}^\infty \ll_i^{-5/2} = \frac {5 c_1}{2} \int_0^\infty \lambda^{-5/2 -1} N(\lambda) \d \lambda <\infty;
\end{equation}
% This together with \eqref{MT3} and \eqref{eq:estimate-e-lambda} implies
% \beq\label{FN} \bigg|\E[\mu(|\nn f_{T,\vv}|^2)]- \ff 2 T\sum_{i=1}^\infty \ff{\e^{-2\vv\ll_i}}{\ll_i^2}\bigg|\les  \frac{1}{T^2} \sum_{i=1}^\infty \ff{\e^{-2\vv\ll_i}}{\ll_i^2} + \ff 1 T.\end{equation}
as well as the asymptotics
\begin{equation}\label{Asy}
\sum_{i=1}^\infty \ff{\e^{-2 \eps \ll_i}}{\ll_i^2} =\ff{{\rm vol}(M)}{16 \pi^2 } \log  (\eps^{-1}) + O(1),  \quad \text{as $\eps\to 0$,}
\end{equation}
which can be also obtained directly from \eqref{small time trace} and integrating by parts. Indeed,
\beq\label{AX} \sum_{i=1}^\infty\ff{\e^{-\ll_i s}}{\ll_i}= \int_s^1 \sum_{i=1}^\infty \e^{-\ll_i t}\d t + O(1)
 = \ff{{\rm vol}(M)}{16 \pi^2 s} + O(\log s^{-1}) \quad \text{as $s\to 0$},\end{equation} and
 \begin{equation*}\begin{split} &\sum_{i=1}^\infty \ff{\e^{-2\vv \ll_i}}{\ll_i^2} = \int_{2\vv}^\infty \sum_{i=1}^\infty\ff{\e^{-\ll_i s}}{\ll_i} \d s
 = \int_{2\vv}^1 \sum_{i=1}^\infty\ff{\e^{-\ll_i s}}{\ll_i} \d s  + O(1)\\
 &= \int_{2\vv}^1 \Big[\ff{{\rm vol}(M)}{16 \pi^2 s}+ O(\log s^{-1})\Big]\d s + O(1)
  = \ff{{\rm vol}(M)}{16 \pi^2 } \log  (\vv^{-1}) + O(1), \quad \text{as}\ \vv \to 0.\end{split}
  \end{equation*}
  Combining the claim \eqref{eq:estimate-e-lambda} with these estimates in \eqref{eq:estimate-e-lambda}
% and recalling that $\vv=\ff{(\log T)^\gamma}T$
yields \eqref{FN0}. %for some constant $\gamma>0$.

\begin{proof}[Proof of Proposition \ref{lemma:ito-tanaka-eigen}]
By \cite[Lemma 4.2(1)]{wang2023convergence}, we have the identity
\beq\label{*0}
{\bf V}(\phi_i)= \ff 1 {\ll_i}- \ff{1}{\ll_i^2}{\bf V}(Z\phi_i).
\end{equation}
By the Markov property and stationarity of $X$, it follows that (writing $\mu(g) = \int_{M} g \d \mu$)
\beq\label{*1}
\beg{split} \E \big[ |\psi_i(T)|^2 \big] &= \ff 2 T\int_0^{T} \d t_1 \int_{t_1}^T \mu(\phi_i P_{t_2 - t_1} \phi_i) \d t_2 =  \ff 2 T\int_0^{T} \d t_1 \int_{0}^{T - t_1} \mu(\phi_i P_t \phi_i) \d t\\
 &= 2\Big(\ff 1 {\ll_i}-\ff 1{\ll_i^2} {\bf V}(Z \phi_i)\Big) - \ff 2 T \int_0^T \d t_1 \int_{T - t_1}^\infty \mu(\phi_i P_t \phi_i)\d t.
\end{split}
\end{equation}
We next evaluate the integrand $\mu(\phi_i P_t \phi_i)$. By Duhamel's formula
 \beq \label{eq:Duhamel}
P_t f = \hat P_t f + \int_{0}^{t} P_s (Z \hat P_{t - s} f) \d s, \ \ t \geq 0
\end{equation}
and $\phi_i$ being an eigenfunction for $\hat{L}$, so that $\hat P_t \phi_i = \e^{-\ll_i t} \phi_i$, we have
\beq\label{ZZ}
P_t \phi_i = \e^{-\ll_i t} \phi_i + \int_0^t \e^{-\ll_i(t-s)} P_s (Z\phi_i)\d s,
\end{equation}
and therefore
\beq\label{*2}
\mu(\phi_i P_t\phi_i) =\e^{-\ll_i t} +\int_0^t \e^{-\ll_i (t-s)} \mu\big((P_s^*\phi_i) Z\phi_i\big) \d s,
\end{equation}
where we write $P_t^*$ for the semigroup with generator $L^* : = \hat L - Z$.
Noting that $\mu(\phi_i Z \phi_i) = 0$ and using Duhamel's formula \eqref{eq:Duhamel} again
$$P_s^* \phi_i = e^{-\ll_i s} \phi_i - \int_0^s \e^{-\ll_i(s-r)} P_r^* (Z\phi_i)\d r,$$
we obtain
$$\mu\big((P_s^*\phi_i) Z\phi_i\big) =-\int_0^s \e^{-\ll_i(s-r)}\mu\big((Z\phi_i)P_r(Z\phi_i)\big)\d r.$$
Combining above identities with the fact that
$$ \big| \mu\big((Z\phi_i)P_r(Z\phi_i)\big) \big| \le \e^{-\ll_1 r}  \|Z\phi_i\|_{L^2(\mu)}^2\le \|Z\|_\infty^2 \ll_i \e^{-\ll_1 r},$$
we derive
\beg{align*}
\big| \mu(\phi_i P_t\phi_i) - \e^{-\ll_i t} \big| \le   \|Z\|_\infty^2 \ll_i\int_0^t \e^{-\ll_i(t-s)}\d s \int_0^s \e^{-\ll_i(s-r)-\ll_1r}\d r \le C \ll_i^{-1} \e^{-\ll_1 t/2}.
\end{align*}
Combining with \eqref{*1} we finish the proof.
\end{proof}

\subsection{Estimate on $\E[|\W_2^2(\mu_{T,\vv},\mu)-\mu(|\nn f_{T,\vv}|^2)|]$  }

The main result in this part is the following (with the assumptions $d=4$ and on $\eps$).

\beg{proposition}\label{PER}
% Let $d=4$ and $\vv=\ff{(\log T)^\gg}{T}$ for some constant $\gamma>3.$ Then for large $T>\e^2$,
It holds
\begin{equation} \label{eq: main result1}
\E\sqa{\left|  \W_2 \left(\mu_{T,\vv}, \mu \right) - \sqrt{ \mu(|\nn f_{T,\vv}|^2) } \right|} \les \sqrt{\frac{1}{T\log T}},
\end{equation}
and
\begin{equation} \label{eq: main result2}
\E\sqa{\left|  \W_2^2\left(\mu_{T,\vv}, \mu \right) -  \mu(|\nn f_{T,\vv}|^2)  \right|} \les \frac{1}{T}.
\end{equation}
\end{proposition}

To prove this result, we need some lemmas. Let $\hat L_x$ and $\nn^n_y$, for $n\in \mathbb N$ stand for the corresponding operators acting on variables $x,y\in M$ respectively (let $\nn^0$ be the identity operator). We state and prove the following result for general dimension $d$ and $\eps \in (0,1)$, for possible future reference.

\beg{lemma}\label{L1}
For any $n\in \mathbb N$ and $p\in (1,\infty)$, there exists a constant $c = (n,p,M,V)< \infty$ such that, for any $\vv \in (0,1)$,
\begin{equation}\label{eq:mixed-bound}
\sup_{y \in M} \int_M \big|\nn_x\hat L_x^{-1} \nn_y^n q_\vv(x,y) \big|^p \mu(\d x) \le c \beg{cases} \vv^{-\ff{(d+n-3)p-d}2}, & \textrm{if } (d + n - 3) p > d;\\
 \{\log (1+\vv^{-1}) \big \}^p, &\textrm{if } (d + n - 3) p = d;\\
 1, &\textrm{if } (d + n - 3) p < d.\end{cases}
\end{equation}
and
\begin{equation}\label{eq:norm-bound}
\sup_{y \in M} \int_M \big|\nn_y^n q_\vv(x,y) \big|^p \mu(\d x) \le c \beg{cases} \vv^{-\ff{(d + n - 2)p-d}2}, & \textrm{if } (d + n - 2) p > d;\\
 \{\log (1+\vv^{-1}) \big \}^p, &\textrm{if } (d + n - 2) p = d;\\
 1, &\textrm{if } (d + n - 2) p < d.\end{cases}
\end{equation}
\end{lemma}

\begin{proof}
These bounds could be established by the same argument in \cite[Corollary 3.13]{AmGlau} using pointwise upper bound of $\big|\nn_x\hat L_x^{-1} \nn_y^n q_\vv \big|$. Here we provide an alternative approach. Let $\rr$ be the Riemannian distance on $M$, so that the standard heat kernel bounds give,  for some constants $c \in (1, \infty)$ and $\ll>0$,
\beq \label{0}
\big| \nn_y^n \big( \hat p_t(x,y)-1 \big) \big|\le c t^{-\ff{d+n}2} \e^{-\ll t- \frac{\rr(x,y)^2}{c t}}, \ \ x, y \in M, \, t>0.
\end{equation}
Consequently, given $1 < p < \infty$, for every $y \in M$ and $t > 0$,
\begin{align*}
& \int_M |\nn_y^n \big( \hat p_t(x,y)-1 \big)|^p \mu(\d x)\\
&\leq c_1 t^{-\ff{(d+n)p}2} \bigg( \int_{\{ x \, ; \, \rr(x, y) \leq \sqrt{t}  \}} + \sum_{k = 1}^{\infty} \int_{\{ x \, ; \,  k \sqrt{t} < \rr(x, y) \leq  (k+1) \sqrt{t}  \}} \bigg)  \e^{- p \ll t - \frac{p \rr(x,y)^2}{c t}} \mu(\d x)\\
&  \leq c t^{-\ff{(d+n)p}2 + \frac{d}{2}} \e^{- p \ll t} \big( 1 + \sum_{k = 1}^{\infty} (k + 1)^d \e^{-\frac{p k^2}{c}} \big) \leq c t^{-\ff{(d+n)p}2 + \frac{d}{2}} \e^{- p \ll t},
\end{align*}
where we conventionally keep denoting with $c$ possibly different constants, and we also used the fact that $\sup_{y \in M} \mu(\{  x \, ; \, \rr(x, y) \leq r  \}) \le c r^d$ for some (possibly different) constant $c<\infty$. Then,
\begin{equation}\label{1}
\sup_{y \in M} \big\| \nn_y^n \big( \hat p_t(\cdot,y) - 1 \big) \big\|_{L^p(\mu)} \le c t^{-\frac{d + n}{2} + \frac{d}{2p}} \e^{- \ll t}.
\end{equation}

On the other hand, notice that
$$\nn_x (-\hat L_x)^{-1} \nn_y^n \hat p_t(x,y)= \nn_x (-\hat L_x)^{-\ff 1 2} \nn_y^n (-\hat L_x)^{-\ff 1 2} \hat  p_t(x,y)= \nn_x (-\hat L_x)^{-\ff 1 2} \nn_y^n (-\hat L_y)^{-\ff 1 2} \hat  p_t(x,y).$$
By the definition of $q_\vv$ and the $L^p$-boundedness of the Riesz transform, we find a constant $c$ such that %(in the following all constants $c_i$ may depend on $n,p$  and the properties of $M$ and $V$)
\beq\label{2} I:= \big\|\nn (-\hat L)^{-1} \nn_y^n q_\vv(\cdot,y)\big\|_{L^p(\mu)} \le c \big\|\nn_y^n  (-\hat L_y)^{-\ff 1 2} q_\vv(\cdot,y)\big\|_{L^p(\mu)}.\end{equation}
Since
$$(-\hat L_y)^{-\ff 1 2} q_\vv(x,y) = \ff 1{\ss\pi} \int_0^\infty \ff 1 {\ss s} \hat P_t q_\vv(x,\cdot)(y) \d t = \ff 1 {\ss\pi} \int_0^\infty \d t\int_0^\infty \ff{\hat p_{t+s+\vv}(x,y)-1}{\ss s} \d s,$$
\eqref{1} and \eqref{2} yield
\beg{align*}
I &\le c \int_0^\infty \d t\int_0^\infty \ff{\big\| \nn_y^n \big( \hat p_{t + s + \vv}(\cdot,y) - 1 \big) \big\|_{L^p(\mu)} }{\ss s} \d s\\
&\le c \int_0^\infty s^{-\ff 1 2}\d s\int_0^\infty \e^{-\ll( t + s + \vv)} (t+s+\vv)^{-\frac{d + n}{2} + \frac{d}{2p}}  \d t\\
&\le c \int_0^\infty s^{-\ff 1 2}\e^{-\ll s}  \Big\{  (s+\vv)^{-\ff{[(d+n-2)p-d]^+}{2p}}+   1_{\{(d+n-2)p=d\}} \log \{1+ (s+\vv)^{-1}\} \Big\}\d s.  \end{align*}
This implies the desired estimate \eqref{eq:mixed-bound}. Similarly,
\beg{align*}
\big\|\nn_y^n q_\vv\big\|_{L^p(\mu)} \leq \int_{\vv}^{\infty} \big\| \nn_y^n \big( \hat p_t(\cdot,y) - 1 \big) \big\|_{L^p(\mu)} \d t.
\end{align*}
Combined this with \eqref{1}, the proof of \eqref{eq:norm-bound} is complete.
\end{proof}

\begin{remark}
It is easy to see that \eqref{0} also implies that for any  $n\in \mathbb N$ such that $d + n > 2$,
\begin{align} \label{eq: derivative bound}
\big\| \nn^n f_{T, \eps}  \big\|_\infty \leq c \int_{\vv}^{\infty} \sup_{x, y \in M} \big| \nn_y^n \big( \hat p_t(x,y)-1 \big) \big| \d t \les \eps^{-\frac{d + n -2}{2}}.
\end{align}
\end{remark}

The second step towards the proof of Proposition \ref{PER} is to evaluate the probability of the event % $\mathbb{P}^\mu(A_{T, \eps}^{\xi})$,
\begin{equation}\label{eq:event-A} A_{T, \eps}^{\xi} := \cur{ \nor{ \nabla ^2 f_{T, \eps}  }_\infty \le \xi },
\end{equation}
for $\xi>0$. % we introduce the event
To this aim, we collect the following concentration inequality for diffusion processes, see also \cite[Corollary 3.2]{wang2024sharp}.

\begin{lemma} \label{concentration}
Assume that the dimension of $M$ is $d \geq 3$. Then, there exists a constant $c = c(M,L) \in (0, \infty)$ such that, for every $g \in L^{d/2}(M)$ with zero mean (i.e., $\mu(g) = 0$) and $T, \xi > 0$,
\begin{align} \label{bernstein}
\mathbb{P} \bra{  \left| \frac1T \int_{0}^{T} g(X_t) \d t \right| > \xi } &\leq 2 \exp \bigg(  - \frac{2 T \xi^2}{\sigma^2(g) \big( \sqrt{1 + 2 c \| g \|_{L^{d/2}(\mu)} \xi/\sigma^2(g)} + 1 \big)^2} \bigg) \nonumber\\
& \leq 2 \exp \bigg(  - \frac{T \xi^2}{2 \big( \sigma^2(g) \, + \, c \| g \|_{L^{d/2}(\mu)} \xi \big)} \bigg),
\end{align}
where
$$
\sigma^2( g ) := 2 \int_M g (- \hat L)^{-1} g \, \d \mu =  \int_M \bigg|\nn \int_0^\infty \hat P_t g\bigg|^2 \d\mu.
$$
% and $\mathbb P^\mu$ is the probability with respect to $\E$.
\end{lemma}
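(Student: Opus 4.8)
The goal is to establish the Bernstein-type concentration inequality \eqref{bernstein} for additive functionals $\frac1T\int_0^T g(X_t)\,\d t$ of the stationary diffusion, under the integrability assumption $g \in L^{d/2}(M)$ with $\mu(g)=0$.

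The plan is to follow the standard route via exponential martingales and moment bounds on the asymptotic variance functional. First I would note that the Poisson equation $-\hat L h = g$ has a solution $h := (-\hat L)^{-1}g$ (well-defined since $\mu(g)=0$), and that the quantity $\sigma^2(g) = 2\int_M g\,(-\hat L)^{-1}g\,\d\mu = \|\nn h\|_{L^2(\mu)}^2$ is exactly the limiting variance in the CLT \eqref{eq:CLT} — indeed $\V(g) = \int_0^\infty\int_M g P_t g\,\d\mu\,\d t$ coincides with $\int_M g(-\hat L)^{-1}g\,\d\mu$ when $Z=0$, but here one must be slightly careful since $L\neq\hat L$; the correct object is the solution of the Poisson equation for the \emph{non-symmetric} generator, and the symmetrization makes $\sigma^2(g)$ depend only on $\hat L$. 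I would instead directly use the decomposition coming from Itô's formula applied to (a suitable approximation of) $h$: writing $M_T$ for the martingale part, one gets $\frac1T\int_0^T g(X_t)\,\d t = \frac1T\big(h(X_0)-h(X_T)\big) + \frac1T M_T$, where $\langle M\rangle_T = 2\int_0^T|\nn h|^2(X_t)\,\d t$ (using $\Gamma(h,h)=|\nn h|^2$). The boundary terms $h(X_0)-h(X_T)$ are controlled but this is exactly where the regularity of $h$, hence the $L^{d/2}$-hypothesis, enters via Sobolev/elliptic estimates: $g\in L^{d/2}$ gives $h\in W^{2,d/2}\hookrightarrow C^0$ in dimension $d$ only marginally, so one should be careful — more likely the argument avoids pointwise control of $h$ and instead uses a time-discretization / Lyapunov approach as in the moderate deviations literature \cite{liming1995moderate}, where the key input is a bound on the exponential moments of $\langle M\rangle_T$.

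The main technical step is then a Bernstein-type estimate for the martingale: for $\theta>0$, $\E\big[\exp(\theta M_T - \frac{\theta^2}{2}\langle M\rangle_T)\big]=1$, and one needs to compare $\langle M\rangle_T = 2\int_0^T|\nn h|^2(X_s)\,\d s$ with its mean $2T\|\nn h\|_{L^2(\mu)}^2 = T\sigma^2(g)$. The self-improving structure — that $|\nn h|^2$ is itself an additive functional whose own variance can be bounded — yields, after optimizing over $\theta$, the stated quadratic-then-linear form with the $\|g\|_{L^{d/2}(\mu)}$ factor multiplying $\xi$; the appearance of the $L^{d/2}$ norm specifically (rather than $L^\infty$ or $L^2$) comes from the sharp Sobolev embedding $W^{2,d/2}\hookrightarrow$ BMO/exponential-class in dimension $d$, which is what makes $\|\nn h\|_\infty$ or $\|h\|_\infty$ ``almost'' controlled and gives the right scaling in the denominator. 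I would invoke \cite[Corollary 3.2]{wang2024sharp} as the model for this computation, adapting it to record the explicit constants and the $L^{d/2}$ dependence; alternatively, one can cite known functional-inequality-based concentration bounds (Guillin–Liptser–Spokoiny–Wang type) and track constants.

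The hardest part, and the one requiring genuine care, is controlling the contribution of the boundary/remainder terms and justifying that the limiting variance $\sigma^2(g)$ — defined purely in terms of the symmetric operator $\hat L$ — is the right normalization even though the process is generated by the non-symmetric $L$. This uses the skew-symmetry $\div_\mu Z = 0$: integrating by parts, $\int_M g(-L)^{-1}g\,\d\mu$ need not be symmetric, but $\int_M g(-\hat L)^{-1}g\,\d\mu$ equals the symmetric part and dominates the relevant quadratic form, so one gets the CLT variance bounded by $\sigma^2(g)$; this is precisely the mechanism behind the inequality $\cost(L,2)\le\cost(\hat L,2)$ noted after \eqref{eq:CLT}. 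I expect the cleanest writeup to proceed by: (i) reducing to the martingale via Itô on $(-\hat L)^{-1}g$ and absorbing the $Z$-term into a bounded correction using $\div_\mu Z=0$; (ii) applying the exponential martingale identity and bounding $\langle M\rangle_T$ above by $T\sigma^2(g)$ plus a fluctuation handled by iterating the concentration bound on $|\nn h|^2$; (iii) optimizing in $\theta$ and simplifying $\big(\sqrt{1+x}+1\big)^{-2}$ to get both displayed bounds. The second (weaker) bound in \eqref{bernstein} follows from the first by the elementary inequality $(\sqrt{1+a}+1)^2 \le 4(1+a) = 4 + 4a$ with $a = 2c\|g\|_{L^{d/2}(\mu)}\xi/\sigma^2(g)$.
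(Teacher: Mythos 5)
You have not given a proof; the route you sketch has several gaps at exactly the points you flag as delicate, and none of them is closed. First, the drift term: with $h:=(-\hat L)^{-1}g$, It\^o's formula under the $L$-diffusion gives $\int_0^T g(X_t)\,\d t = h(X_0)-h(X_T)+M_T+\int_0^T (Zh)(X_t)\,\d t$; the last integral is a full additive functional of order $T$, and $\div_\mu Z=0$ does not let you ``absorb it into a bounded correction'' pathwise — while if you instead solve the Poisson equation for $L$, the identification of the quadratic variation with $T\sigma^2(g)$ (defined through $\hat L$) is precisely what would remain to be proved. Second, the boundary terms: $g\in L^{d/2}$ only gives $h\in W^{2,d/2}$, which is borderline and does not yield $\|h\|_\infty<\infty$; you acknowledge this and defer to an unspecified ``time-discretization / Lyapunov approach'', but the lemma is claimed for all $T,\xi>0$ with constants depending only on $(M,L)$, so this is the crux, not a detail. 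Third, the step comparing $\langle M\rangle_T=2\int_0^T|\nabla h|^2(X_s)\,\d s$ with $T\sigma^2(g)$ by ``iterating the concentration bound'' is asserted, not performed, and your proposed mechanism for the appearance of $\|g\|_{L^{d/2}}$ (a $W^{2,d/2}\hookrightarrow$ BMO-type embedding of the Poisson solution) is not substantiated and is not what produces it. Finally, invoking \cite[Corollary 3.2]{wang2024sharp} is circular here: that is the very reference the statement points to as ``see also''; citing it is not a proof. (A small computational slip as well: to pass from the first to the second bound with the \emph{same} constant $c$ one uses $(\sqrt{1+a}+1)^2\le 4+2a$, i.e.\ $\sqrt{1+a}\le 1+a/2$; your bound $4+4a$ only gives the second line with $2c$.)

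For contrast, the paper's proof avoids every one of these issues by working on the level of the rate function rather than the trajectory. It applies Wu's deviation inequality \cite{wu2000deviation}, whose rate function $I_g(\xi)$ involves only the symmetric Dirichlet form $\mu(|\nabla h|^2)$ — this is where the non-symmetric part $Z$ disappears for free, with no martingale decomposition and no Poisson equation for $g$. Then, following Gao--Guillin--Wu, the Bernstein form \eqref{bernstein} reduces to the elementary functional inequality $|\mu(gh^2)|\le \sqrt{2\sigma^2(g)\mu(|\nabla h|^2)}+c\|g\|_{L^{d/2}(\mu)}\mu(|\nabla h|^2)$ for all $h$ with $\mu(h^2)=1$, proved by splitting $\mu(gh^2)=2\bar h\,\mu(gh)+\mu(g(h-\bar h)^2)$, estimating the first term by Cauchy--Schwarz in the spectral form $\mu(gh)=\mu((-\hat L)^{-1/2}g\cdot(-\hat L)^{1/2}h)$, and the second by H\"older together with the Sobolev--Poincar\'e embedding $\|h-\bar h\|_{L^{2d/(d-2)}(\mu)}\le c\,\mu(|\nabla h|^2)^{1/2}$ (this is where $d\ge 3$ and the $L^{d/2}$ norm of $g$ enter). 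If you want to salvage your martingale route you would need to genuinely resolve points (1)--(3) above; as written, the argument would not go through.
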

\begin{proof}
By \cite[theorem 1]{wu2000deviation} we have
\beq\label{WP}
\mathbb{P} \bra{ \left| \frac1T \int_{0}^{T} g(X_t) \d t \right| > \xi } \leq 2 \exp \big( - T I_g(\xi-) \big), \quad \forall \, T, \xi > 0,
\end{equation}
where
% (by convention, $\inf \emptyset = + \infty$)
$$
I_g(\xi):= \inf \big\{\mu(|\nn h|^2) \, ; \, h\in W^{2,1}(\mu), \mu(h^2) = 1, \, |\mu(h^2g)| = \xi, \, \mu(h^2 |g|) < \infty \big\}
$$
and
$$
\quad I_g(\xi -) : = \lim_{\varepsilon \to 0+} I_g(\xi - \varepsilon).
$$
Following to the argument used in \cite[theorem 2.2]{gao2014bernstein}, for every $h \in W^{2,1}(\mu)$ with $\mu(h^2) = 1$, we notice that
$$
\frac{2 |\mu(g h^2)|^2}{\sigma^2(g) \big( \sqrt{1 + 2 c \| g \|_{L^{d/2}(\mu)} |\mu(g h^2)| /\sigma^2(g)} + 1 \big)^2} \leq \mu(|\nn h|^2)
$$
is equivalent to
\begin{equation}\label{XX}
|\mu(g h^2)| \leq \sqrt{2\sigma^2(g) \mu(|\nn h|^2)} +  c \| g \|_{L^{d/2}(\mu)} \mu(|\nn h|^2).
\end{equation}
So, by \eqref{WP}, it suffices to verify \eqref{XX} for some constant $c>0$.
We write
\begin{align*}
\mu(g h^2) = 2 \bar{h} \mu(g h) + \mu\big( g (h - \bar h)^2  \big),
\end{align*}
where $\bar{h}: = \mu(h)$. Since $|\bar{h}| \leq \mu(h^2)^{1/2} = 1$ and by Cauchy-Schwarz inequality
\begin{align*}
|\mu(g h)| = | \mu \big(  (-\hat L)^{-\frac12} g \cdot (-\hat L)^{\frac12} h \big)| \leq \sqrt{\frac{\sigma^2(g) \mu(|\nn h|^2)}{2}},
\end{align*}
we obtain
$$
|2 \bar{h} \mu(h g)| \leq \sqrt{2 \sigma^2(g) \mu(|\nn h|^2)}.
$$
On the other hand, by the Sobolev-Poincar\'e inequality on $M$, %(which holds andis a consequence of ultracontractivity the fact that
% \begin{align*}
% \| \hat P_t \|_{1 \to \infty} \les t^{-d/2}, \quad  t \in (0, 1),
% \end{align*}
it follows that $h \in L^\frac{2d}{d - 2}(\mu)$ and there exists $c <\infty$ such that
$$
\| h - \bar{h} \|_{L^\frac{2d}{d - 2}(\mu)} \leq c \mu(|\nn h|^2)^{1/2}.
$$
Combined with H\"older's inequality, it follows that
\begin{align*}
\abs{\mu\big( g (h - \bar h)^2 \big)} \leq  \| g \|_{L^{d/2}(\mu)} \| h - \bar{h} \|_{L^\frac{2d}{d - 2}(\mu)}^2  \leq c^2 \| g \|_{L^{d/2}(\mu)} \mu(|\nn h|^2),
\end{align*}
which implies \eqref{XX} up to replacing $c$ with $c^2$. % = C^2$.
\end{proof}

Back to our four-dimensional setting, we apply the concentration inequality to estimate the probability of $A^{\xi}_{T, \vv}$ with $\xi = 1/(\log T)$, which is sufficient for our purposes.

\begin{lemma} \label{prop:flatness}
%Let $d = 4$. %For $T \ge 2$, set $\eps  = \frac{(\log T)^\gamma}{T}$ where $\gamma > 3$ and $\xi = \frac{1}{\log T}$.
There exists a constant $C = C(M, L, \gamma)>0$ such that, for $\xi = 1/\log T$, it holds
$$ \mathbb{P}  \big(  ( A_{T, \eps}^\xi )^{\mathsf{c}} \big) \les  \exp\big(-C(\log T)^{\gamma-2}\big). $$
\end{lemma}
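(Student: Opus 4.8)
The plan is to control $\|\nn^2 f_{T,\vv}\|_\infty$ by reducing it, via a net, to a finite family of time-averages of scalar zero-mean observables of $X$, to each of which the Bernstein-type inequality of Lemma \ref{concentration} applies. \textbf{Pointwise reduction.} Since $\nn^2 f_{T,\vv}(y)$ is a symmetric $2$-tensor, $\|\nn^2 f_{T,\vv}\|_\infty=\sup_{y\in M}\sup_{|u|=1}\abs{(\nn^2 f_{T,\vv})(y)(u,u)}$, the inner supremum over unit $u\in T_yM$. By \eqref{MT2}, for fixed $(y,u)$ we have $(\nn^2 f_{T,\vv})(y)(u,u)=\ff1T\int_0^T g_{y,u}(X_t)\,\d t$ with $g_{y,u}(x):=(\nn_y^2 q_\vv(x,y))(u,u)$, and $\mu(g_{y,u})=0$: indeed $\int_M q_\vv(x,y)\,\mu(\d x)=\int_0^\infty\big[\int_M\hat p_{t+\vv}(x,y)\,\mu(\d x)-1\big]\d t=0$ ($\hat p$ being $\mu$-symmetric and stochastic), so $\mu(g_{y,u})$ is the second $y$-derivative of the constant $0$. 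Thus $g_{y,u}$ is admissible in Lemma \ref{concentration} (here $d/2=2$).

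\textbf{Concentration at one point.} Since $\abs{g_{y,u}}\le\abs{\nn_y^2 q_\vv(\cdot,y)}$ and, after applying $\nn_x\hat L_x^{-1}$, $\abs{\nn_x\hat L_x^{-1}g_{y,u}}\le\abs{\nn_x\hat L_x^{-1}\nn_y^2 q_\vv(x,y)}$, the estimates \eqref{eq:norm-bound} and \eqref{eq:mixed-bound} with $n=2$, $p=2$, $d=4$ (for which $(d+n-2)p-d=4$ and $(d+n-3)p-d=2$) give, uniformly in $(y,u)$, $\|g_{y,u}\|_{L^2(\mu)}^2\les\vv^{-2}$ and $\sigma^2(g_{y,u})=2\mu\big(g_{y,u}(-\hat L)^{-1}g_{y,u}\big)\les\vv^{-1}$. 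Taking $\xi=1/\log T$ and inserting these into the second line of \eqref{bernstein} with threshold $\xi/2$, the denominator $\sigma^2(g_{y,u})+c\|g_{y,u}\|_{L^2(\mu)}\xi$ is $\les\vv^{-1}$ (as $\xi\to 0$), so $\mathbb{P}\big(\abs{(\nn^2 f_{T,\vv})(y)(u,u)}>\xi/2\big)\le 2\exp(-c\,T\vv\,\xi^2)$. With $\vv=(\log T)^\gg/T$ this is $T\vv\,\xi^2=(\log T)^{\gg-2}$, hence the single-point probability is $\le 2\exp(-c(\log T)^{\gg-2})$.

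\textbf{From one point to the supremum.} By the Remark after Lemma \ref{L1}, $\|\nn^3 f_{T,\vv}\|_\infty\les\vv^{-5/2}$ and $\|\nn^2 f_{T,\vv}\|_\infty\les\vv^{-2}$; integrating $\nn^3 f_{T,\vv}$ along geodesics (comparing tensors at nearby points by parallel transport) and using that $u\mapsto(\nn^2 f_{T,\vv})(y)(u,u)$ is quadratic with coefficients $\les\vv^{-2}$, the map $(y,u)\mapsto(\nn^2 f_{T,\vv})(y)(u,u)$ has Lipschitz modulus $\les\vv^{-5/2}$ on the unit tangent bundle. Choose a net $\{(y_j,u_j)\}_{j\le N}$ of mesh $\dd\asymp\xi\vv^{5/2}$, so that $N\les\dd^{-(2d-1)}$ is a fixed power of $T$ up to $\log$-factors; on $\{\abs{(\nn^2 f_{T,\vv})(y_j)(u_j,u_j)}\le\xi/2\ \forall j\}$ one has $\|\nn^2 f_{T,\vv}\|_\infty\le\xi$, i.e.\ $A_{T,\vv}^\xi$ holds. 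A union bound then yields $\mathbb{P}\big((A_{T,\vv}^\xi)^{\mathsf{c}}\big)\les N\exp(-c(\log T)^{\gg-2})$, and since $\gg>3$ forces $\gg-2>1$, the polynomial factor $N=\exp(O(\log T))$ is absorbed for $T$ large, giving $\mathbb{P}\big((A_{T,\vv}^\xi)^{\mathsf{c}}\big)\les\exp(-C(\log T)^{\gg-2})$ with $C=c/2$.

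\textbf{Main obstacle.} Everything rests on the two a priori bounds $\sigma^2(g_{y,u})\les\vv^{-1}$ and $\|g_{y,u}\|_{L^2(\mu)}\les\vv^{-1}$ from Lemma \ref{L1}; once these are available, matching $T\vv\,\xi^2$ with $(\log T)^{\gg-2}$ is immediate. The delicate part is the net step: making the reduction to directional second derivatives uniform over the manifold (parallel transport, covering the unit tangent bundle) and verifying that the net may be taken polynomially fine in $1/T$ — which is exactly where the standing assumption $\gg>3$ is used to kill the polynomial prefactor.
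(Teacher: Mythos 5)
Your proposal is correct and follows essentially the same route as the paper: pointwise Bernstein concentration (Lemma \ref{concentration}) applied to $g=\nabla_y^2 q_\vv(\cdot,y)$ with the $L^2$ bounds of Lemma \ref{L1} ($n=p=2$, $d=4$) giving denominator $\les \vv^{-1}$ and exponent $T\vv\xi^2=(\log T)^{\gamma-2}$, then a net/union bound using $\|\nabla^3 f_{T,\vv}\|_\infty\les\vv^{-5/2}$ with the polynomial prefactor absorbed since $\gamma>3$. The only (immaterial) difference is that you net the unit tangent bundle and work with directional second derivatives, while the paper nets $M$ itself using the Lipschitz bound on $y\mapsto|\nabla^2 f_{T,\vv}|(y)$.
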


\begin{proof}
For fixed $y \in M$, applying   \eqref{bernstein} with $g = \nabla^2_y q_{\eps}(\cdot, y)$, and
using Lemma \ref{L1} with $p=n=2$ and $d=4$,  we find a constant $C>0$ such that
\begin{align*}
\mathbb{P} \bra{  \abs{ \nabla^2_y f_{T, \eps}(y)} > \xi/2 } &\les \exp \bigg( - \frac{T \xi^2}{2   \int_M|\nabla_x \hat L_x^{-1} \nabla^2_y q_{\eps}(x, y) |^2  \mu (\d x) + c \| \nabla^2_y q_{\eps}(\cdot, y) \|_{L^2(\mu)} \xi }   \bigg)\\
& \les \exp \big( - C (\log T)^{\gamma-2}  \big).
\end{align*}
Furthermore, by \eqref{eq: derivative bound} with $n = 3$, we can always bound the Lipschitz constant of $y \mapsto |\nabla^2 f_{T,\eps}|(y)$ in terms of
$$ K : =  \big\| \nn^3 f_{T, \eps}  \big\|_\infty \les \eps^{-\frac{5}{2}}.$$
Thus, choosing a suitable $\ell$-net with $K \cdot \ell = \xi/2$, hence with $N(\ell) \les (K / \xi)^4$ elements, we obtain that
$$
\mathbb{P} \bra{ \sup_{y\in M} \abs{ \nabla^2_y f_{T, \eps}(y)} > \xi } \les N(\ell)  \cdot  \exp ( - C (\log T)^{\gamma - 2} \big) \les  \eps^{-10} \xi^{-4} \exp ( - C (\log T)^{\gamma - 2}  ).
$$
This implies the desired estimate, for a smaller constant $C>0$.
\end{proof}

\beg{proof}[Proof of Proposition \ref{PER}] We set $\xi = 1/\log T$ and consider the event $A_{T, \eps}^{\xi}$. By Lemma \ref{prop:flatness}, \eqref{eq: main result1} follows if
\beq\label{Le2} \E \sqa{1_{A_{T,\eps}^\xi}\left|  \W_2 \left(\mu_{T,\vv}, \mu \right) - \sqrt{ \mu(|\nn f_{T,\vv}|^2) } \right|} \les \sqrt{\frac{1}{T\log T}}.\end{equation}
To prove this estimate, we introduce the probability measure $ \hat \mu_{T,\eps} := \exp(\nabla f_{T, \eps})_\# \mu$.  By \cite[theorem 1.1]{glaudo2019c} on $A^{\xi}_{T,\eps}$ the map  $\nabla f_{T, \eps}$ is the optimal map transforming from $\mu$ to $\hat{\mu}_{T,\eps}$, so that
\beq\label{LM}
\W_2^2 \bra{\hat{\mu}_{T,\eps}, \mu} = \mu(|\nn f_{T,\vv}|^2).
\end{equation}
Next, we argue that, still on $A^{\xi}_{T,\eps}$
\beq\label{LM2}
\W_2^2 \bra{\mu_{T, \eps}, \hat\mu_{T,\eps}} \les \xi^2 \mu(|\nn f_{T,\vv}|^2).
\end{equation}
This is a consequence of the Dacorogna-Moser interpolation scheme: since $f_{T,\eps}=(-\hat L)^{-1} (u_{T,\eps}-1)$ and $\div_\mu  \circ \nn = \hat L,$ where, the function $u_s := (1 - s) + s u_{T, \eps}$ and the time-dependent vector field
$$
Y_s := \frac{\nabla f_{T, \eps}}{u_s},\ \ s\in [0,1]
$$
satisfy the equation
$$
\frac{\d}{\d s}u_s + \text{div}_{\mu}\big( u_s Y_s  \big) = 0.
$$
Then, by \cite[Proposition A.1.]{AmGlau}, one obtains \eqref{LM2}.
% $$ \W_2^2 \bra{\mu_{T, \eps}, \hat\mu_{T,\eps}}  \les \xi^2  \mu(|\nn f_{T,\vv}|^2).$$

By the triangle inequality, we derive
\beq\label{Le3}
\E \sqa{1_{A_{T,\eps}^\xi}\left|  { \W_2 \left(\mu_{T, \eps}, \mu \right)} - \sqrt{ \mu(|\nn f_{T,\vv}|^2) } \right|} \les   \xi \E \sqa{\mu(|\nn f_{T,\vv}|^2)}^{1/2}.
\end{equation}
This together with \eqref{FN0} implies \eqref{Le2},   and hence \eqref{eq: main result1} is proved.

To prove the other estimate,  write
\begin{align*}
\E \sqa{\left|  \W_2^2\left( \mu_{T,\vv}, \mu \right) -  \mu( | \nabla f_{T, \eps} |^2 )  \right|} \leq \big(\E \sqa{ A^2 }\big)^{1/2} \Big(\big( \E \sqa{ A^2 }\big)^{1/2} + 2 \big(\E\sqa{B^2}\big)^{1/2} \Big),
\end{align*}
where
$$
A = \W_2 \left(\mu_{T,\vv}, \mu \right) - \sqrt{ \mu(|\nn f_{T,\vv}|^2) }, \quad B = \sqrt{\mu(|\nn f_{T,\vv}|^2) }.
$$
Noting that \eqref{LM} and the triangle inequality imply
\begin{align*}
\E \sqa{ A^2 } &= \E \sqa{ \bigg| \W_2 ( \mu_{T, \eps}, \mu ) - \sqrt{ \mu(|\nn f_{T,\vv}|^2) } \bigg|^2  }\\
& \leq   \E \sqa{\W_2^2\big( \mu_{T, \eps}, \hat \mu_{T,\eps} \big)}\\
&\le D^2\P\big((A_{T,\vv}^\xi)^c\big)+ \E\big[1_{A_{T,\vv}^\xi} \W_2^2\big( \mu_{T, \eps}, \hat \mu_{T,\eps} \big)\big]\\
& \les \xi^2 \mu(|\nn f_{T,\vv}|^2) \les \frac{1}{T \log T},
\end{align*}
where $D$ is the diameter of $M$. Then \eqref{eq: main result2}  follows from    \eqref{FN0}, Lemma \ref{prop:flatness} and \eqref{LM2}.

\end{proof}

 \section{Improved contractivity estimate} \label{sec:refinedcontractivity}

 % Estimate on $\E[\W_2^2(\mu_T,\mu_{T,\vv})]$} \label{sec:refinedcontractivity}

Aim of this section is to establish the following improved version of \eqref{TR}.

\beg{lemma}\label{LLN}
With the notation introduced above, on a four dimensional closed Riemannian manifold $M$,  it holds
\beq
\label{AX2}
\E \left[  \W_2^2  \left(\mu_T, \mu_{T, \eps} \right) \right] \les  \frac{\log \log T}{T}.
\end{equation}
\end{lemma}

\beg{proof} Let $\xi= {1}/{\log T}$. Then on the event $A_{T,\eps}^\xi$ introduced in \eqref{eq:event-A}, we have
$$\|u_{T,\eps}-1\|_\infty = \|\hat L f_{T,\eps}\|_\infty\les \|\nabla^2 f_{T,\eps}\|_\infty\le \xi =1/(\log T)$$
where we have used the fact that $| \nn V \nn f_{T,\eps} | \les \|\nn V \|_\infty \|\nn^2 f_{T,\eps}\|_\infty.$
Combining this with
Ledoux's upper bounds for $\W_2$ (see \cite{AmGlau, Le17} or \cite[Lemma A.1]{wang2022wasserstein}), we have for large $T>0$ and any $\eps' \in (0, \eps)$,
$$\W_2^2(\mu_{T, \eps'}, \mu_{T, \eps})\le 4  \int_M  \ff{|\nabla (-\hat L)^{-1} (u_{T, \eps'} - u_{T, \eps})|^2}{u_{T, \eps}}  \d \mu \leq 8 \int_M |\nabla (-\hat L)^{-1} (u_{T, \eps'} - u_{T, \eps})|^2 \d \mu.$$
Then, on $A^{\xi}_{T, \vv}$ we find
\beq\label{Le1}  \beg{split}
\W_2^2(\mu_{T, \eps'}, \mu_{T, \eps}) & \les   \int_M  |\nabla (-\hat L)^{-1} (u_{T, \eps'} - u_{T, \eps})|^2  \d \mu
=  \frac{1}{T}\sum_{i = 1}^{\infty} \frac{(\e^{- \lambda_i \eps'} - \e^{- \lambda_i \eps})^2}{\ll_i}   |\psi_i(T)|^2 \\
& \leq \frac{1}{T} \sum_{i = 1}^{\infty} \frac{\e^{- 2 \lambda_i \eps'} - \e^{- 2 \lambda_i \eps}}{\lambda_i}  |\psi_i(T)|^2 = 2 \int_{\eps'}^{\eps} \| u_{T, s} - 1 \|_{L^2(\mu)}^2  \d s.
\end{split}
\end{equation}
Next, by \eqref{spectral}, Lemma \ref{lemma:ito-tanaka-eigen} and \eqref{AX}, we have
%\begin{equation} \label{eq:L2 norm}
$$\E \sqa{ \| u_{T, s} - 1 \|_{L^2(\mu)}^2 } = \frac{1}{T} \sum_{i = 1}^{\infty} \e^{-2 \lambda_i s} \E \left[ |\psi_i(T)|^2 \right] \les T^{-1} \sum_{i=1}^{\infty} \ll_i^{-1} \e^{-2 \lambda_i s} \les T^{-1} s^{-1}.$$
%\end{equation}
%Then on the event $A_{T, \eps}^{\xi}$, where $\eps = (\log T)^5/T$ and $\xi = 1/\log T$,
%$$
%\| u_{T, \eps} - 1 \|_\infty = \| (- \hat L) f_{T, \eps} \|_\infty \les  \nor{ \nabla ^2 f_{T, \eps}  }_\infty  \to 0, \quad \text{ as   $T \to \infty$,}
%$$
%where we also used the fact that $\nor{\nabla f}_\infty \les \nor{\Delta f}_\infty$. Then by
 By \eqref{Le1}, this gives
$$
\E \big[ 1_{A_{T,\eps}^\xi} \W_2^2(\mu_{T, \eps'}, \mu_{T, \eps}) \big]\les T^{-1} \log(\eps/\eps').
$$
Then, using the triangle inequality and the fact that $\E\big[ \W_2^2(\mu_T, \mu_{T, \eps'})  \big]\les \eps'$ by \eqref{TR}, we derive
\begin{align*}
\E\left[ 1_{A_{T, \eps}^{\xi}} \W_2^2  \left(\mu_T, \mu_{T, \eps} \right) \right] &\les \E\big[ \W_2^2(\mu_T, \mu_{T, \eps'})  \big] + \E\big[ 1_{A_{T, \eps}^{\xi}} \W_2^2(\mu_{T, \eps'}, \mu_{T, \eps}) \big]\\
& \les \eps' + T^{-1} \log(\eps/\eps').
\end{align*}
Choosing finally $\eps' = \frac{\log \log T}{T}$ and combining with Proposition \ref{prop:flatness}, we finish the proof.
\end{proof}

\section{Proof of theorem \ref{T1}}\label{sec:proof-main}
 We consider first  the case of $X$ being stationary. In this situation, we argue that
 \beq\label{ST} \bigg|\ff T{\log T}\E[\W_2^2(\mu_T,\mu)] - \ff{{\rm vol}(M)}{8\pi^2}\bigg|\les
 \ss{\ff{\log\log T}{\log T}}.\end{equation}
Indeed, by \eqref{FN0} and Proposition \ref{PER} we have
$$ \bigg|\ff T{\log T}\E[\W_2^2(\mu_{T,\vv},\mu)]-\ff{{\rm vol}(M)}{8\pi^2}\bigg|\les
\ff {\log \log T}{\log T}.$$
Combining this with Lemma \ref{LLN} implies
\beg{align*} &\bigg|\ff T{\log T}\E[\W_2^2(\mu_T,\mu)] - \ff{{\rm vol}(M)}{8\pi^2}\bigg|\\
&\les \ff {\log \log T}{\log T}+\ff T{\log T} \Big|\E[\W_2^2(\mu_T,\mu)-\W_2^2(\mu_{T,\vv},\mu)]\Big| \\
&\le \ff {\log \log T}{\log T}+ \ff T{\log T}\Big(\E[\W_2^2(\mu_T, \mu_{T,\vv})] +2 \sqrt{ \E[\W_2^2(\mu_T, \mu_{T,\vv})] \cdot \E[\W_2^2(\mu_T, \mu)] }\Big)\\
&\les \ff {\log \log T}{\log T} + \ss{\ff{\log\log T}{\log T} }
\les  \ss{\ff{\log\log T}{\log T} }.\end{align*}
So, \eqref{ST} holds in the case of $X$ stationary.

To address the general case, for any $x\in M$, let $(X_t^x,X_t^\mu)$ be the coupling by reflection for the diffusions generated by $L$ with initial distributions $\dd_x$ and $\mu$ respectively (so that $X^\mu$ is stationary). According to the proof of  \cite[Theorem 1]{chen1997general}, there exists an increasing function $g: [0,D]\to [0,\infty)$, where $D$ is the diameter of $M$, such that
$$c_1r\le g(r)\le c_2 r,\ \ r\in [0,D]$$
holds for some constants $c_2>c_1>0$ (independent of $x$) and that
$$\d g(\rr(X_t^x,X_t^\mu))\le \d M_t-\dd g(\rr(X_t^x,X_t^\mu))\d t$$ holds for some martingale $M_t$ and a constant $\dd>0$. Therefore by taking expectation, we obtain
$$
\E [ g(\rr(X_t^x, X_t^\mu))] \leq \mu \big( g(\rho(x, \cdot)) \big) \e^{-\delta t}.
$$
Consequently,  writing $$\mu_T^x:=\ff 1 T\int_0^T\dd_{X_t^x}\d t \quad \text{ and }\quad   \mu_T^\mu:= \ff 1 T\int_0^T \dd_{X_t^\mu}\d t$$ satisfy
$$\E[\W_2^2(\mu_T^x,\mu_T^\mu)]\le \ff 1 T\int_0^T \E[\rr^2(X_t^x,X_t^\mu)]\d t\le \ff{c_1^{-1} c_2 D^2}{T}\int_0^T\e^{-\dd t}\d t\le \ff{c_1^{-1} c_2 D^2}{\dd T}.$$
% Noting that
% $$\E[\W_2^2(\mu_T^x,\mu)]= \E^x [\W_2^2(\mu_T,\mu)],\ \ \E[\W_2^2(\mu_T^\mu,\mu)]=\E[\W_2^2(\mu_T,\mu)],$$
We thus derive  from \eqref{ST} that
\beg{align*} &\sup_{x\in M}\big|\E^x[\W_2^2(\mu_T,\mu)] -\E[\W_2^2(\mu_T^{\mu},\mu)]\big|= \sup_{x\in M}\big|\E[\W_2^2(\mu_t^x,\mu)- \W_2^2(\mu_T^\mu,\mu)]\big|\\
&\le \sup_{x\in M}\Big( \E [\W_2^2(\mu_T^x,\mu_T^\mu)]+ 2 \E\big[\W_2(\mu_T^x,\mu_T^\mu)\W_2(\mu_T^\mu,\mu)\big]\Big)\\
&\les \ff 1 T + \ff 1 {\ss T} \sqrt{\E[\W_2^2(\mu_T^{\mu},\mu)]} \les \ff{\ss{\log T}}T.\end{align*}
Applying \eqref{ST} again, we derive that for large $T$,
\beg{align*} \sup_{x\in M} \bigg|\ff T{\log T}\E^x[\W_2^2(\mu_{T,\vv},\mu)]-\ff{{\rm vol}(M)}{8\pi^2}\bigg|\les
 \ss{\ff{\log\log T}{\log T}} +  \ff{1}{\ss{\log T}}\les \ss{\ff{\log\log T}{\log T}},\end{align*}
 and the proof is completed.

%%%%%%%%%%%%%%%%%%%%%%%%%%%%%%%%%%%%%%%%%%%%%%%%%%%%%%%%%%%%%%%%%%%
%%                                                               %%
%% Use the two commands below for producing your bibliography    %%
%% with bibtex, then comment again the commands and include the  %%
%% content of the .bbl file in this file below the commands.     %%
%%                                                               %%
%%%%%%%%%%%%%%%%%%%%%%%%%%%%%%%%%%%%%%%%%%%%%%%%%%%%%%%%%%%%%%%%%%%

\bibliographystyle{amsplain}
\bibliography{OT.bib}

\bigskip

\textbf{Acknowledgements.}\\
\indent D.T.\ acknowledges the MUR Excellence Department Project awarded to the Department of Mathematics, University of Pisa, CUP I57G22000700001,  the HPC Italian National Centre for HPC, Big Data and Quantum Computing - Proposal code CN1 CN00000013, CUP I53C22000690001, the PRIN 2022 Italian grant 2022WHZ5XH - ``understanding the LEarning process of QUantum Neural networks (LeQun)'', CUP J53D23003890006, the INdAM-GNAMPA project 2024 ``Tecniche analitiche e probabilistiche in informazione quantistica'' and the project  G24-202 ``Variational methods for geometric and optimal matching problems'' funded by Università Italo Francese.  Research also partly funded by PNRR - M4C2 - Investimento 1.3, Partenariato Esteso PE00000013 - "FAIR - Future Artificial Intelligence Research" - Spoke 1 "Human-centered AI", funded by the European Commission under the NextGeneration EU programme.

\sloppy F.-Y.W.  and J.-X.Z.  acknowledge  the National Key R\&D Program of China (No. 2022YFA1006000, 2020YFA0712900) and NNSFC (11921001).

\end{document}